\newtheorem{lemma}{Lemma}
\newtheorem{theorem}{Theorem}
\newcommand{\Q}{\mathbb Q}
\newcommand{\C}{\mathbb C}
\newcommand{\acr}{\newline\indent}
\begin{document}

\title[Diophantine triples and $k$-generalized Fibonacci sequences]{Diophantine triples with values in $k$-generalized Fibonacci sequences}

\author[C. Fuchs \and C. Hutle \and F. Luca \and L. Szalay]{Clemens Fuchs* \and Christoph Hutle* \and Florian Luca** \and Laszlo Szalay***}

\address{\llap{*\,}University of Salzburg \acr Hellbrunner Str. 34/I \acr 5020 Salzburg \acr AUSTRIA}
\email{clemens.fuchs@sbg.ac.at, christoph.hutle@gmx.at}
\address{\llap{**\,}U.~Witwatersrand, SOUTH AFRICA}
\email{florian.luca@wits.ac.za}
\address{\llap{***\,}U.~West Hungary, HUNGARY}
\email{szalay.laszlo@emk.nyme.hu}

\thanks{C.F. and C.H. were supported by FWF (Austrian Science Fund) grant No. P24574 and by the Sparkling Science project EMMA grant No. SPA 05/172.}
\subjclass[2010]{ Primary 11D72, 11B39; Secondary 11J87}
\keywords{Diophantine triples, generalized Fibonacci numbers, Diophantine equations, application of the Subspace theorem}

\maketitle

\begin{abstract}
We show that if $k\ge 2$ is an integer and  $(F_n^{(k)})_{n\ge 0}$ is the sequence of $k$-generalized Fibonacci numbers, then there are only finitely many triples of positive integers $1<a<b<c$ such that $ab+1,~ac+1,~bc+1$ are all members of $\{F_n^{(k)}: n\ge 1\}$. This generalizes a previous result (cf. \cite{FuLuSz1}) where the statement for $k=3$ was proved. The result is ineffective since it is based on Schmidt's subspace theorem.
\end{abstract}

\section{Introduction}
\label{sec:1}

There are many papers in the literature concerning Diophantine $m$-tuples which are sets of $m$ distinct positive integers $\{a_1,\ldots,a_m\}$ such that $a_ia_j+1$ is a square for all $1\le i<j\le m$ (see \cite{Du}, for example). A variation of this classical problem is obtained if one changes the set of squares by some different subset of positive integers like $k$-powers for some fixed $k\ge 3$, or perfect powers, or primes, or members of some linearly recurrent sequence, etc. (see \cite{FuLuSz}, \cite{LuSz}, \cite{LuSz1}, \cite{SzZi}, \cite{ISz1}). In this paper, we study this problem with the set of values of $k$-generalized Fibonacci numbers for some integer $k\ge 2$. Recall that these numbers denoted by $F_n^{(k)}$ satisfy the recurrence \[F_{n+k}^{(k)}=F_{n+k-1}^{(k)}+\cdots+F_n^{(k)}\] and start with $0,0,\ldots,0$ ($k-1$ times) followed by $1$. Notationwise, we assume that $F_i^{(k)}=0$ for $i=-(k-2),~-(k-1),~\ldots,~0$, and $F_1^{(k)}=1$. For $k=2$ we obtain the Fibonacci and for $k=3$ the sequence of Tribonacci numbers. The result is the following:

\begin{theorem}
\label{thm:main}
Let $k\ge 2$ be fixed. Then there are only finitely many triples of positive integers $1<a<b<c$ such that
\begin{equation}
\label{eq:1}
ab+1=F_x^{(k)},\qquad ac+1=F_y^{(k)},\qquad bc+1=F_z^{(k)}
\end{equation}
hold for some integers $x,y,z$.
\end{theorem}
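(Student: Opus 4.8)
The plan is to combine the three equations into one relation amenable to Schmidt's subspace theorem, which we apply over the splitting field $K$ of $P_k(X)=X^{k}-X^{k-1}-\cdots-X-1$. Let $\alpha=\alpha_{1},\dots,\alpha_{k}$ be its roots, so $\alpha>1$ is dominant and $|\alpha_{i}|<1$ for $i\ge 2$; since $P_k$ has constant term $-1$, each $\alpha_{i}$ is a unit in $\mathcal O_K$. Write $F_{n}^{(k)}=\gamma\alpha^{n}+\zeta_{n}$ with $\gamma\in K^{*}$, $0<\gamma<1$, and $\zeta_{n}=\sum_{i\ge 2}\gamma_{i}\alpha_{i}^{\,n}$, where $|\zeta_{n}|<1/2$ for all $n\ge 1$ and $\zeta_{n}\to 0$ (Dresden--Du). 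First dispose of the solutions with $x$ bounded (if $y$ or $z$ is bounded so is $x$, since $ab<ac<bc$ forces $x<y<z$): then $ab=F_{x}^{(k)}-1$ shows $a,b$ are bounded, the identity $aF_{z}^{(k)}=bF_{y}^{(k)}-(b-a)$ forces $z-y$ bounded, and comparing dominant terms forces $y$, hence all indices, bounded. So from now on $x<y<z$ with $x$ (hence $y,z$) large.

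Now consider $\mathbf P=(\alpha^{x}:\alpha^{y}:\alpha^{z}:ab:ac:bc:1)\in\mathbb P^{6}(K)$ and fix the archimedean place $v_{0}$ of $K$ with $\alpha\mapsto\alpha_{1}$, where $|\alpha|_{v_{0}}$ is maximal. At $v_{0}$ use the seven (linearly independent) forms $\gamma X_{1}-X_{4}-X_{7}$, $\gamma X_{2}-X_{5}-X_{7}$, $\gamma X_{3}-X_{6}-X_{7}$, $X_{4}$, $X_{5}$, $X_{6}$, $X_{7}$, whose values at $\mathbf P$ begin with $-\zeta_{x},-\zeta_{y},-\zeta_{z}$ and are therefore $O(1)$; at every other archimedean place use the coordinate forms $X_{1},\dots,X_{7}$. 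Since the $\alpha_{i}$ are units and $ab,ac,bc$ are rational integers with $ab\asymp\alpha^{x}$, $ac\asymp\alpha^{y}$, $bc\asymp\alpha^{z}$ and $(abc)^{2}=(F_{x}^{(k)}-1)(F_{y}^{(k)}-1)(F_{z}^{(k)}-1)\asymp\alpha^{x+y+z}$, a short computation using the product formula for the units $\alpha^{x},\alpha^{y},\alpha^{z}$ (which contributes the shrinking factor $\|\alpha^{x+y+z}\|_{v_{0}}^{-1}$) gives $\prod_{v}\prod_{i}\|L_{i,v}(\mathbf P)\|_{v}/\|\mathbf P\|_{v}\ll\alpha^{-cz}$ for some $c>0$, while $H(\mathbf P)\asymp\alpha^{z}$. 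Hence the subspace theorem puts all such $\mathbf P$ in finitely many proper subspaces of $K^{7}$.

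So for infinitely many solutions a fixed nontrivial relation $c_{1}\alpha^{x}+c_{2}\alpha^{y}+c_{3}\alpha^{z}+c_{4}ab+c_{5}ac+c_{6}bc+c_{7}=0$ holds. Replacing $ab$ by $F_{x}^{(k)}-1$, etc., and using the Binet expansion turns this into a vanishing linear combination of $\alpha^{x},\alpha^{y},\alpha^{z}$ (with coefficients $c_{j}+c_{j+3}\gamma$, $j=1,2,3$), of the subdominant power sums $\gamma_{i}\alpha_{i}^{\,x},\gamma_{i}\alpha_{i}^{\,y},\gamma_{i}\alpha_{i}^{\,z}$ ($2\le i\le k$), and of a constant. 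If $c_{3}+c_{6}\gamma\neq 0$ the $\alpha^{z}$-term dominates and one of $z-y$, $z-x$, $z$ must be bounded; if it vanishes one argues the same way with $\alpha^{y}$, then $\alpha^{x}$; the only remaining possibility is $c_{j}+c_{j+3}\gamma=0$ for $j=1,2,3$, when the relation collapses to $c_{4}\zeta_{x}+c_{5}\zeta_{y}+c_{6}\zeta_{z}=\mathrm{const}$, a vanishing $S$-unit equation among the $\gamma_{i}\alpha_{i}^{\,x},\gamma_{i}\alpha_{i}^{\,y},\gamma_{i}\alpha_{i}^{\,z}$ to which the subspace ($S$-unit) theorem again applies, yielding a linear relation among $x,y,z$ or a bounded index.

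In every case we have either bounded some index---and are then done by the first step---or bounded a difference, say $d=z-y$. Fixing it and repeating with one fewer free variable (a point in $\mathbb P^{5}(K)$, then $\mathbb P^{4}(K)$), after at most two further reductions we reach a one-parameter family with fixed gaps, where $a^{2}=(F_{x}^{(k)}-1)(F_{y}^{(k)}-1)/(F_{z}^{(k)}-1)=\gamma\alpha^{x-d}+E_{0}+o(1)$ for a fixed $E_{0}\in K$. A last application of the subspace theorem to $(\alpha^{x}:1:a^{2})$ forces $x$ bounded or the exact identity $a^{2}=\gamma\alpha^{x-d}+E_{0}$; and the latter is impossible for large $x$, since applying a Galois embedding $\sigma$ with $\sigma(\alpha)=\alpha_{\ell}$ ($\ell\ge 2$) gives $a^{2}=\sigma(\gamma)\alpha_{\ell}^{\,x-d}+\sigma(E_{0})$, whose right-hand side stays bounded. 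This proves finiteness. The hardest part is the bookkeeping in the degenerate cases---above all the ``$\zeta$-relation'' case, where the small contributions of the subdominant roots (whose number and, for $k\ge 3$, complex configuration depend on $k$) must be controlled by a second round of the subspace/$S$-unit machinery---together with checking that the induction on the number of free variables terminates.
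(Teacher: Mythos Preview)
Your first application of the subspace theorem does not go through. With the seven forms you choose, the double product $\prod_{v}\prod_{i}|L_{i,v}(\mathbf P)|_{v}/\|\mathbf P\|_{v}$ is \emph{not} bounded by $\alpha^{-cz}$ for any $c>0$. The three ``small'' values $-\zeta_{x},-\zeta_{y},-\zeta_{z}$ at $v_{0}$ are only $O(1)$ (at best $O(|\alpha_{2}|^{x})$ etc.), while the three rational-integer coordinates $ab,ac,bc$ are large at \emph{every} archimedean place and together contribute a factor $\asymp(abc)^{2}\asymp\alpha^{x+y+z}$ that the product-formula gain $\|\alpha^{x+y+z}\|_{v_{0}}^{-1}$ does not cancel. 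Working over $\Q(\alpha)$, the product of all linear-form values comes out of order
\[
\bigl(|\alpha_{2}|\cdot\alpha^{\,k-1}\bigr)^{(x+y+z)/k},
\]
and since $\prod_{i\ge 2}|\alpha_{i}|=\alpha^{-1}$ forces $\max_{i\ge 2}|\alpha_{i}|\ge\alpha^{-1/(k-1)}$, one has $|\alpha_{2}|\,\alpha^{k-1}\ge\alpha^{k(k-2)/(k-1)}$, which equals $1$ for $k=2$ and exceeds $1$ for every $k\ge 3$. So the product is merely bounded for $k=2$ and actually grows for $k\ge 3$; the subspace inequality fails and no subspace conclusion follows. (Working over the full splitting field makes matters worse, since there are several places at which $|\alpha|_{v}$ is maximal, not just your $v_{0}$.) Because this step is the engine of your whole argument, the subsequent reductions to bounded gaps and the final Galois-boundedness contradiction are never reached.

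The paper meets exactly this difficulty by a different device: rather than putting the integers $ab,ac,bc$ into the coordinates, it expands $c=\sqrt{(F_{y}-1)(F_{z}-1)/(F_{x}-1)}$ by the binomial series to an arbitrary order $T$, obtaining $c=\sqrt{f_{1}}\,\alpha^{(-x+y+z)/2}\bigl(1+\sum_{j<n}d_{j}M_{j}\bigr)+O(\alpha^{-Tz/C_{1}})$ with the $M_{j}$ monomials in the $\alpha_{i}$ (hence $S$-units). Now only a \emph{single} integer coordinate $c$ appears, all other coordinates are $S$-units and drop out by the product formula, and one linear form is as small as $\alpha^{-Tz/C_{1}}$ with $T$ at our disposal---this is what makes the subspace inequality hold. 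For the error to be controlled one also needs a uniform lower bound $x>C_{1}z$, obtained from the gcd estimate $\gcd(F_{x}^{(k)}-1,F_{z}^{(k)}-1)<\alpha^{kz/(k+1)}$; your sketch never secures such a bound. After the subspace step the paper, like you, passes via $S$-unit equations to a one-parameter line, but the endgame is again different: comparing leading terms forces $e_{0}^{2}=f_{1}\alpha^{\epsilon}$, and the contradiction is the arithmetic statement $\sqrt{f_{1}\alpha^{\epsilon}}\notin\mathbb L$, proved by a norm computation that reduces to showing $2^{k+1}k^{k}-(k+1)^{k+1}=w^{2}$ has no integer solutions.
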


Our result generalizes the results obtained in \cite{LuSz}, \cite{GoLu} and \cite{FuLuSz1}, where this problem was treated for the cases $k=2$ and $k=3$. In \cite{LuSz} it was shown that there does not exist a triple of positive integers $a,b,c$ such that $ab+1,ac+1,bc+1$ are Fibonacci numbers. In \cite{GoLu} it was shown that there is no Tribonacci Diophantine quadruple, that is a set of four positive integers $\{a_1,a_2,a_3,a_4\}$ such that $a_ia_j+1$ is a member of the Tribonacci sequence ($3$-generalized Fibonacci sequence) for $1\le i<j\le 4$, and in \cite{FuLuSz1} it was proved that there are only finitely many Tribonacci Diophantine triples. In the current paper we prove the same result for all such triples having values in the sequence of $k$-generalized Fibonacci numbers.

For the proof of Theorem \ref{thm:main}, we proceed as follows. In Section \ref{sec:2}, we recall some properties of the $k$-generalized Fibonacci sequence $F_n^{(k)}$ which we will need and we prove two lemmata. The first lemma shows that any $k-1$ roots of the characteristic polynomial are multiplicatively independent. In the second lemma the greatest common divisor of $F_x^{(k)}-1$ and $F_y^{(k)}-1$ for $2<y<x$ is estimated. In Section \ref{sec:4}, we assume that the Theorem \ref{thm:main} is false and give, using the Subspace Theorem, a finite expansion of infinitely many solutions. In Section \ref{sec:5}, we use a parametrization lemma which is proved by using results about finiteness of the number of non-degenerate solutions to $S$-unit equations. Applying it to the finite expansion, this leads us to a condition on the leading coefficient, which turns out to be wrong. This contradiction is obtained by showing that a certain Diophantine equations has no solutions; this last Diophantine equation has been treated in particular cases in \cite{CiLu} and \cite{Mar}.

\section{Preliminaries}
\label{sec:2}

There are already many results in the literature about $(F_n^{(k)})_{n\ge 0}$. We will only use what we need, which are the following properties. The sequence $(F_n^{(k)})_{n\ge 0}$ is linearly recurrent of  characteristic polynomial
\[\Psi_k(X)=X^k-X^{k-1}-\cdots-X-1.\]
The polynomial $\Psi_k(X)$ is separable and irreducible in ${\mathbb Q}[X]$ and the Galois group thus acts transitively on the roots, which we denote by $\alpha_1,\ldots,\alpha_k$. If $k$ is even or prime, the Galois group is certainly $S_k$ (see \cite{Mar} for these statements). The polynomial $\Psi_k(X)$ has only one root, with out loss of generality assume that $\alpha_1>1$, which is outside the unit disk (formally, this root depends also on $k$, but in what follows we shall omit the dependence on $k$ on this and the other roots of $\Psi_k(X)$ in order to avoid notational clutter). Thus,
\[
\Psi_k(X)=\prod_{i=1}^k (X-\alpha_i),\qquad {\text{\rm where}}\qquad |\alpha_i|<1,\quad i=2,\ldots,k.
\]
Observe that $\alpha_1\alpha_2\cdots\alpha_k=(-1)^{k-1}$. Note also that
\[
\Psi_k(X)=X^k-\left(X^{k-1}+\cdots+1\right)=X^k-\frac{X^k-1}{X-1}=\frac{X^{k+1}-2X^k+1}{X-1},
\]
a representation which is sometimes useful. Furthermore,
\begin{equation}
\label{eq:2}
2-\frac{1}{k}<\alpha_1<2
\end{equation}
(see Lemma 3 in \cite{Dre}).  The Binet formula of $(F_n^{(k)})_{n\ge 0}$ is given by
\begin{equation}
\label{eq:Binet}
F_n^{(k)}=\sum_{i=1}^k f_i \alpha_i^n\qquad {\text{\rm for~all}}\qquad n\ge 0,
\end{equation}
where
\begin{equation}
\label{eq:coefficient}
f_i=\frac{(\alpha_i-1)\alpha_i^{-1}}{2+(k+1)(\alpha_i-2)}\qquad i=1,\ldots,k
\end{equation}
(see Theorem 1 in \cite{Dre}). We have
\begin{equation}
\label{eq:approx}
|F_n^{(k)}-f_1\alpha_1^k|<\frac{1}{2}\qquad {\text{\rm for~all}}\qquad n\ge 1
\end{equation}
and
\begin{equation}
\label{eq:f1approx}
f_1 < 1
\end{equation}
(see Theorem 2 in \cite{Dre}). We also need the fact that
\begin{equation}
\label{eq:size}
\alpha_1^{n-2}<F_n^{(k)}<\alpha_1^{n-1}
\end{equation}
(see \cite{Bra}).

Furthermore, the following property is of importance; it follows from the fact that there is no non-trivial multiplicative relation between the conjugates of a Pisot number (cf. \cite{Mignotte}). Since in our case it is rather easy to verify we shall present a proof of what we need.

\begin{lemma}\label{prop:multindep}
Each set of $k-1$ different roots (e.g. $\{\alpha_1, \ldots, \alpha_{k-1}\}$) is multiplicatively independent.
\end{lemma}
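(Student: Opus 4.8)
The plan is to work not with one distinguished $(k-1)$-element subset but with the lattice of \emph{all} multiplicative relations among the full set of roots,
\[
R=\Bigl\{(m_1,\dots,m_k)\in\Z^k:\ \prod_{i=1}^{k}\alpha_i^{m_i}=1\Bigr\},
\]
and to show that $R$ has rank $1$. Since $\alpha_1\cdots\alpha_k=(-1)^{k-1}$ we always have $(2,2,\dots,2)\in R$ (and $(1,\dots,1)\in R$ when $k$ is odd), so $\mathrm{rank}\,R\ge 1$; conversely, if $R=\Z\cdot(c,c,\dots,c)$ for a single nonzero integer $c$, then the only vector of $R$ having a zero coordinate is $0$, which is exactly the statement that any $k-1$ of the $\alpha_i$ are multiplicatively independent. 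The reduction is reversible: given a relation $\prod_{i=1}^{k-1}\alpha_i^{n_i}=1$, multiplying by a suitable power of $\alpha_1\cdots\alpha_k=\pm1$ produces an element of $R$ with a zero coordinate. Finally, it suffices to treat one $(k-1)$-subset, because $\Psi_k$ is irreducible, so $G=\mathrm{Gal}(\Psi_k/\Q)$ is transitive on $\{\alpha_1,\dots,\alpha_k\}$, hence on $(k-1)$-subsets.

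Next I would assume, for contradiction, that $\mathrm{rank}\,R\ge 2$, so that there is a relation $\prod_{i=1}^{k}\alpha_i^{m_i}=1$ whose exponent vector is not a multiple of $(1,\dots,1)$; fix indices with $m_a\ne m_b$. Applying $\sigma\in G$ gives a whole orbit of relations $\prod_{i}\alpha_{\sigma(i)}^{m_i}=1$. The key step is to combine two conjugate relations so that all exponents cancel except two, leaving $(\alpha_a/\alpha_b)^{N}=1$ with $N\ne0$, which forces $\alpha_a/\alpha_b$ to be a root of unity. When $G=S_k$ — which by \cite{Mar} holds whenever $k$ is even or prime — this is immediate: conjugating by the transposition $(a\,b)$ and dividing by the original relation yields precisely $(\alpha_a/\alpha_b)^{\,m_a-m_b}=1$, and the same works for every pair of indices, so $\alpha_i/\alpha_j$ is a root of unity whenever $m_i\ne m_j$. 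For an arbitrary transitive Galois group one instead invokes the fact recorded before the lemma, from \cite{Mignotte}, that the conjugates of a Pisot number admit no multiplicative relation beyond the norm relation; I would include the $S_k$ argument in full since it is elementary and covers the representative cases.

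The contradiction then comes from the sizes of the roots. As $\alpha_1$ is a Pisot number it is the unique root of $\Psi_k$ of modulus $>1$ (indeed $2-\tfrac1k<\alpha_1<2$ while $|\alpha_j|<1$ for $j\ge2$), so $|\alpha_1|\ne|\alpha_j|$ for every $j\ne1$ and $\alpha_1/\alpha_j$ is never a root of unity. Applying the previous paragraph with one of the two indices equal to $1$ (possible after replacing the relation by a $G$-conjugate, using transitivity) we get $m_1=m_j$ for all $j$, so the exponent vector is a multiple of $(1,\dots,1)$, contrary to the choice above. Hence $\mathrm{rank}\,R=1$ and the lemma follows.

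The step I expect to be the main obstacle is making the ``cancel all but two exponents'' move work for an arbitrary transitive $G$: without $G=S_k$ one has no transpositions available, and one must either quote the full Pisot statement of \cite{Mignotte} or argue linear-algebraically that the vector $\bigl(\log|\alpha_1|,\dots,\log|\alpha_k|\bigr)$, together with the $G$-action, spans the whole augmentation subspace $\{x\in\mathbb R^k:\sum x_i=0\}$ — a claim that does use transitivity of $G$ and the fact that $\log\alpha_1$ is the unique positive coordinate, but whose verification is the delicate point.
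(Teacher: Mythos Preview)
Your transposition trick is clean when $G=\mathrm{Gal}(\Psi_k/\Q)=S_k$, but as you yourself flag, that is only known for $k$ even or prime. For the remaining $k$ your argument collapses to either a black-box citation of \cite{Mignotte} or an unexecuted ``linear-algebraic'' claim about the $G$-orbit of $(\log|\alpha_1|,\dots,\log|\alpha_k|)$ spanning the augmentation subspace. Since the paper states the lemma for all $k\ge 2$ and advertises a self-contained proof, this is a genuine gap, not a cosmetic one.

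The paper fills exactly this gap, and the argument needs only transitivity of $G$, never the full symmetric group. For each $i=1,\dots,k-1$ choose $\sigma_i\in G$ with $\sigma_i(\alpha_i)=\alpha_1$ (transitivity) and form the $(k-1)\times(k-1)$ matrix $A=\bigl(\log|\sigma_i(\alpha_j)|\bigr)_{i,j}$. Each diagonal entry is $\log|\alpha_1|>0$; each off-diagonal entry is the log-modulus of a conjugate other than $\alpha_1$ (since $\sigma_i$ permutes the roots and already sends $\alpha_i$ to $\alpha_1$), hence negative. From $\prod_{j=1}^{k}|\sigma_i(\alpha_j)|=|\pm1|=1$ one gets
\[
\sum_{j=1}^{k-1}\log|\sigma_i(\alpha_j)|=-\log|\sigma_i(\alpha_k)|>0,
\]
so in every row the diagonal term strictly exceeds the sum of the absolute values of the off-diagonal terms. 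Thus $A$ is strictly diagonally dominant, hence nonsingular, and the logarithmic images of $\alpha_1,\dots,\alpha_{k-1}$ are $\mathbb{R}$-linearly independent --- which is stronger than multiplicative independence. This is precisely the ``delicate point'' you left open, resolved in two lines; you should replace the $S_k$/transposition argument with it.
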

\begin{proof}
We shall prove the statement only for the set $\{\alpha_1,\ldots,\alpha_{k-1}\}$. The general statement follows easily by slightly changing the arguments below.

Let us denote by $\mathbb{L} := \mathbb{Q}(\alpha_1, \dots, \alpha_k)$ the splitting field of $\Psi_k$ over $\mathbb{Q}$ and by $\mathcal{O}_\mathbb{L}$ the ring of integers in $\mathbb{L}$. Note, that the roots $\alpha_1,\ldots,\alpha_k$ are certainly in the group of units $\mathcal{O}_\mathbb{L}^\times$ which follows from
\begin{displaymath}
\alpha_i^{-1} = (-1)^{k-1}(\alpha_1 \cdots \alpha_{i-1} \cdot \alpha_{i+1} \cdots \alpha_k)
\end{displaymath}
for $i=1,\ldots,k$. The extension $\mathbb{L}/\mathbb{Q}$ is Galois. We denote by $d$ its degree and by $G:=\operatorname{Gal}(\mathbb{L}/\mathbb{Q})$ the Galois group of $\mathbb{L}$ over $\mathbb{Q}$. Let $G=\{\sigma_1,\ldots,\sigma_d\}$. We consider the map $\lambda:\mathcal{O}_\mathbb{L}^\times\rightarrow \mathbb{R}^d$ defined by $ x \mapsto (\log \vert \sigma_1(x) \vert,\log\vert\sigma_2(x)\vert,\ldots,\log\vert\sigma_d(x)\vert)$. Observe that by the product formula we have for $x\in \mathbb{L}^\times$ that \[\prod_{v\in M_\mathbb{L}}\vert x\vert_v=1;\] and for $x\in\mathcal{O}_\mathbb{L}^\times$ this leads to \[\prod_{\substack{v \in \mathbb{L} \\ v | \infty}} \vert x \vert_v = 1\] since for every finite place $v$ we have $\vert x\vert_v=1$. This means that the image of $\lambda$ lies in the hyperplane defined by $X_1+\cdots+X_d=0$ of $\mathbb{R}^d$.

We will use the property that $\lambda$ is a homomorphism (see e.g. \cite{JN}). So if we can prove, that the $k-1$ vectors $\lambda(\alpha_1),\ldots,\lambda(\alpha_{k-1})$ are linearly independent, this will prove the statement.

Since $\Psi_k$ is irreducible over $\mathbb{Q}$, the Galois group $G$ of $\mathbb{L}$ over $\mathbb{Q}$ acts transitively on $\{\alpha_1,\ldots,\alpha_k\}$, i.e. for each $i = 1, \dots, k$ there is some Galois automorphism which sends $\alpha_i$ to $\alpha_1$; without loss of generality let $\sigma_1,\ldots,\sigma_k$ be such that $\sigma_i(\alpha_i) = \alpha_1$. Observe that $\sigma_i^{-1}(\alpha_1)=\{\alpha_i\}$ for $i=1,\ldots,k$. We have \begin{equation*}\begin{split}\lambda(\alpha_1)&=(\log\vert\alpha_1\vert,\log\vert\sigma_2(\alpha_1)\vert,\ldots,\log\vert\sigma_{k-1}(\alpha_1)\vert,\ldots),\\
\lambda(\alpha_2)&=(\log\vert\sigma_1(\alpha_2)\vert,\log\vert\alpha_1\vert,\ldots,\log\vert\sigma_{k-1}(\alpha_2)\vert,\ldots),\\
&\vdots\\
\lambda(\alpha_{k-1})&=(\log\vert\sigma_1(\alpha_{k-1})\vert,\log\vert\sigma_2(\alpha_{k-1})\vert,\ldots,\log\vert\alpha_{1}\vert,\ldots).
\end{split}\end{equation*}
We will show that the matrix $(\log\vert\sigma_i(\alpha_j)\vert)_{i=1,\ldots,k-1;j=1,\ldots,k-1}$ consisting of the first $k-1$ entries of these $k-1$ vectors has rank $k-1$ implying the statement of the lemma.

Observe now that $\alpha_1\cdots\alpha_k=(-1)^{k-1}$ and thus $\vert\sigma_i(\alpha_1\cdots\alpha_k)\vert=1$ for all $i=1,\ldots,k-1$. It follows that $\sum_{j=1}^k\log\vert\sigma_i(\alpha_j)\vert=0$ and thus \[\sum_{j=1}^{k-1}\log\vert\sigma_i(\alpha_j)\vert=-\log\vert\sigma_i(\alpha_k)\vert>0\] for $j=1,\ldots,k-1$ and all $i=1,\ldots,k-1$. Hence the transpose of the matrix above is strictly diagonal-dominant since the diagonal entries, which are all equal to $\log\vert\alpha_1\vert$, are positive, all other entries are negative, and each row-sum (in the transpose matrix) of all off-diagonal entries is in absolute value less than the corresponding diagonal entry. It follows that the matrix is regular, which is what we wanted to show.
\end{proof}

Finally, we prove the following result, which generalizes Proposition 1 in \cite{FuLuSz1}. Observe that the upper bound depends now on $k$.

\begin{lemma}
\label{lem:1}
Let $x>y\ge 3$. Then
\begin{equation}
\label{eq:gcd}
\gcd(F_x^{(k)}-1,F_y^{(k)}-1)<\alpha_1^{\frac{kx}{k+1}}.
\end{equation}
\end{lemma}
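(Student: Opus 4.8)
The plan is to bound the gcd $d:=\gcd(F_x^{(k)}-1,F_y^{(k)}-1)$ by exploiting the Binet representation \eqref{eq:Binet}. Since $d\mid F_x^{(k)}-1$ and $d\mid F_y^{(k)}-1$, we have $F_x^{(k)}\equiv F_y^{(k)}\equiv 1\pmod d$, and more usefully $d\mid (F_x^{(k)}-1)$ means $F_x^{(k)}-1$ is a nonzero multiple of $d$ (nonzero because $x\ge 3$ forces $F_x^{(k)}\ge 2$). The first step is to pass to the ring of integers $\mathcal{O}_{\mathbb L}$ of the splitting field $\mathbb L=\mathbb Q(\alpha_1,\ldots,\alpha_k)$ and factor $F_n^{(k)}-1$ using \eqref{eq:Binet}: writing $F_n^{(k)}-1=\sum_{i=1}^k f_i\alpha_i^n-1$, I want to isolate the dominant term $f_1\alpha_1^n$. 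By \eqref{eq:approx} the quantity $F_n^{(k)}-f_1\alpha_1^n$ is bounded, so $F_n^{(k)}-1$ is, up to a bounded additive error, equal to $f_1\alpha_1^n$; in particular $|F_n^{(k)}-1|$ is of size $\alpha_1^n$ up to a constant, by \eqref{eq:size}.

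The key idea (as in Proposition 1 of \cite{FuLuSz1}) is to produce from the two divisibilities a single algebraic integer that is divisible by a large power related to $d$ yet has small absolute value — or, dually, to bound $d$ via a resultant/norm computation. Concretely, from $F_x^{(k)}\equiv 1$ and $F_y^{(k)}\equiv 1\pmod d$ one forms a $\mathbb Z$-linear combination killing the constant terms, e.g. consider the algebraic integer obtained by eliminating $1$ between the two Binet expansions; this gives that $d$ divides an expression of the form $\sum_i f_i(\alpha_i^x-\alpha_i^y)\cdot(\text{something})$, or better, one works $p$-adically: for each prime $p\mid d$ and each prime ideal $\mathfrak p\mid p$ in $\mathcal O_{\mathbb L}$, the valuation $v_{\mathfrak p}(F_x^{(k)}-1)$ and $v_{\mathfrak p}(F_y^{(k)}-1)$ are both positive. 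Using the Binet formula, $v_{\mathfrak p}(f_1\alpha_1^n+\cdots)$ being positive, together with Lemma \ref{prop:multindep} (multiplicative independence of the $\alpha_i$, which prevents the conjugate terms from conspiring to cancel $\mathfrak p$-adically for too many $n$), forces a relation that caps how large $v_{\mathfrak p}(d)$ can be relative to $\log\alpha_1$. Summing the local contributions and comparing with the archimedean size $F_y^{(k)}-1<\alpha_1^{y-1}$ of the smaller term, one gets $d$ bounded by roughly $\alpha_1^{cy}$ for some constant $c<1$.

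To sharpen the constant to $k/(k+1)$, I expect one must use the explicit shape of $\Psi_k$. The factorization $\Psi_k(X)=(X^{k+1}-2X^k+1)/(X-1)$ recorded in the preliminaries is presumably the crucial input: it shows $\alpha_1$ is a root of $X^{k+1}-2X^k+1$, i.e. $\alpha_1^{k+1}=2\alpha_1^k-1$, equivalently $\alpha_1^k(\alpha_1-2)=-1$, so $\alpha_1-2=-\alpha_1^{-k}$. This tight relation, fed into the estimate $2-1/k<\alpha_1<2$ from \eqref{eq:2}, is what should convert a crude exponent into exactly $kx/(k+1)$: one writes $F_x^{(k)}-1\approx f_1\alpha_1^x$ and tracks the norm $N_{\mathbb L/\mathbb Q}(F_x^{(k)}-1)$, whose size is governed by the small conjugates $|\alpha_i|<1$ ($i\ge2$), and balances this against $d^{[\mathbb L:\mathbb Q]/\text{something}}\le |N(F_y^{(k)}-1)|$. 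The main obstacle, I anticipate, is exactly this bookkeeping: controlling the archimedean sizes of all conjugates $|\sigma(F_x^{(k)}-1)|$ simultaneously (the conjugates permute the $\alpha_i$, so a conjugate of $F_x^{(k)}$ need not be dominated by a single term and could be as small as a constant), and then extracting the precise exponent $k/(k+1)$ rather than something weaker like $1-\delta$. Handling those "small" conjugates — and ensuring $d$'s contribution to the norm is spread correctly — is where the care is needed; the multiplicative independence lemma is what guarantees no degenerate coincidences spoil the count.
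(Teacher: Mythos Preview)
Your proposal is not yet a proof, and the ideas you sketch miss the actual mechanism. The paper's argument is much more elementary than what you describe, and several of the tools you flag as essential (Lemma~\ref{prop:multindep} on multiplicative independence, the factorization $\Psi_k(X)=(X^{k+1}-2X^k+1)/(X-1)$, any $p$-adic analysis) play no role whatsoever in proving this lemma.

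The key step you are missing is a specific cancellation trick. One works in $\mathbb{K}=\mathbb{Q}(\alpha_1)$, which has degree exactly $k$ (not in the full splitting field $\mathbb{L}$). Since $\alpha_1$ is a unit, $d$ divides the algebraic integer
\[
(F_x^{(k)}-1)-\alpha_1^{x-y}(F_y^{(k)}-1)
\]
in $\mathcal{O}_{\mathbb{K}}$. The point is that this combination is engineered so that the \emph{dominant} terms $f_1\alpha_1^x$ cancel exactly (not the constant $1$'s, as you suggest), leaving something of archimedean size only $O(\alpha_1^{x-y})$ at the identity embedding, while each of the other $k-1$ conjugates is trivially bounded by $O(\alpha_1^x)$ (here $F_x^{(k)}-1$ is rational, so conjugation only moves the factor $\alpha_1^{x-y}$, and $|\alpha_i|<1$ for $i\ge 2$). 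Taking the norm gives
\[
d^k\le \left|N_{\mathbb{K}/\mathbb{Q}}\big((F_x^{(k)}-1)-\alpha_1^{x-y}(F_y^{(k)}-1)\big)\right|<\alpha_1^{(x-y)+(k-1)x},
\]
so $d<\alpha_1^{x-(x-y)/k}$. This is only useful when $x-y$ is large; when $y$ is small one instead uses the trivial bound $d\le F_y^{(k)}-1<\alpha_1^{y-1}$. Balancing the two regimes at $y\approx \kappa x$ with $\kappa=1-\kappa/k$ yields $\kappa=k/(k+1)$, which is exactly where the exponent comes from. There is no deeper arithmetic input: no multiplicative independence, no special identity for $\alpha_1^{k+1}$, and no control of valuations at finite primes beyond the single norm inequality $d^k\le |N_{\mathbb{K}/\mathbb{Q}}(d\eta)|$.
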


\begin{proof}
We may assume that $y\ge 4$, since for $y=3$, we get $F_y^{(k)}-1=1$, and there is nothing to prove. We put $d=\gcd(F_x^{(k)}-1,F_y^{(k)}-1)$. Let $\kappa$ be a constant to be determined later. If $y\le \kappa x+1$, then
\begin{equation}
\label{eq:ysmall}
d\le F_y^{(k)}-1<F_y^{(k)}<\alpha_1^{y-1}\le \alpha_1^{\kappa x}.
\end{equation}
From now on, we assume that $y>\kappa x+1$. Using \eqref{eq:Binet} and \eqref{eq:approx}, we write
\begin{equation}\label{eq:Fxandy}\begin{split}
F_x^{(k)} & =  f_1 \alpha_1^x+\zeta_x,\qquad |\zeta_x|<1/2,\\
F_y^{(k)} & =  f_1 \alpha_1^y+\zeta_y,\qquad |\zeta_y|<1/2.
\end{split}\end{equation}
We put ${\mathbb K}={\mathbb Q}(\alpha_1)$. We let $\lambda=x-y<(1-\kappa )x-1$, and note that
\[
d\mid (F_x^{(k)}-1)-\alpha_1^{\lambda} (F_y^{(k)}-1)\qquad {\text{\rm in}}\qquad {\mathbb K}.
\]
We write
\[
d\eta=(F_x^{(k)}-1)-\alpha_1^{\lambda} (F_y^{(k)}-1),
\]
where $\eta$ is some algebraic integer in ${\mathbb K}$. Note that the right-hand side above is not zero, for if it were, we would get $\alpha_1^{\lambda}=(F_x^{(k)}-1)/(F_y^{(k)}-1)\in {\mathbb Q}$, which is false for $\lambda>0$. We compute
norms from ${\mathbb  K}$ to ${\mathbb Q}$. Observe that
\begin{equation*}\begin{split}
 \left|(F_x^{(k)}-1)-\alpha_1^{\lambda}(F_y^{(k)}-1)\right|& =  \left|(f_1\alpha_1^x+\zeta_x-1)-\alpha_1^{\lambda}(f_1\alpha_1^{y}+\zeta_y-1)\right|\\
& =  \left|\alpha_1^{\lambda}(1-\zeta_y)-(1-\zeta_x)\right| \\
& \le   \frac{3}{2}\alpha_1^{\lambda}-\frac{1}{2}<\frac{3}{2}\alpha_1^{\lambda}<\alpha_1^{\lambda+1}\le \alpha_1^{(1-\kappa)x}.
\end{split}\end{equation*}
In the above, we used the fact that
\[
-1/2<\zeta_x,\zeta_y<1/2
\]
(see \eqref{eq:approx}) as well as \eqref{eq:2}. Further, let $\sigma_i$ be any Galois automorphism that maps $\alpha_1$ to $\alpha_i$. Then for $i\ge 2$, we have
\begin{equation*}\begin{split}
\left|\sigma_i\left((F_x^{(k)}-1)-\alpha_1^{\lambda} (F_y^{(k)}-1)\right)\right| & =  \left|(F_x^{(k)}-1)-\alpha_i^{\lambda} (F_y^{(k)}-1)\right|\\
& <  F_x^{(k)}-1+F_y^{(k)}-1<\alpha^{x-1}+\alpha^{y-1}-2\\
& <  \alpha^{x-1}\left(1+\alpha^{-1}\right)\le  \alpha^x.
\end{split}\end{equation*}
We then have
\begin{equation*}\begin{split}
d^k & \le  |N_{{\mathbb K}/{\mathbb Q}}(d\eta)|\\
& \le  \left| N_{{\mathbb K}/{\mathbb  Q}}((F_x^{(k)}-1)-\alpha_1^{\lambda} (F_y^{(k)}-1))\right|\\
& =  \left|\prod_{i=1}^k \sigma_i\left((F_x^{(k)}-1)-\alpha_1^{\lambda} (F_y^{(k)}-1)\right)\right|\\
& <  \alpha_1^{(1-\kappa)x} (\alpha_1^{x})^{k-1}=\alpha_1^{(k-\kappa)x}.
\end{split}\end{equation*}
Hence,
\begin{equation}
\label{eq:ylarge}
d\le \alpha_1^{(1-\kappa/k)x}.
\end{equation}
In order to balance \eqref{eq:ysmall} and \eqref{eq:ylarge}, we choose $\kappa$ such that $\kappa=1-\kappa/k$, giving $\kappa=k/(k+1)$, and the lemma is proved.
\end{proof}

\section{Parametrizing the solutions}
\label{sec:4}
In order to simplify the notation we shall from now onwards write $F_n$ instead of $F_n^{(k)}$; we still mean the $n$th $k$-generalized Fibonacci number. The arguments in this section follow the arguments from \cite{FuLuSz1}. We will show that if there are infinitely many solutions to \eqref{eq:1}, then all of them can be parametrized by finitely many expressions as given in \eqref{eq:c} for $c$ below.

We assume that there are infinitely many solutions to \eqref{eq:1}. Then, for each integer solution $(a,b,c)$, we have
\[
a = \sqrt{\frac{(F_x - 1)(F_y - 1)}{F_z - 1}}, \, b = \sqrt{\frac{(F_x - 1)(F_z - 1)}{F_y - 1}}, \, c = \sqrt{\frac{(F_y - 1)(F_z - 1)}{F_x - 1}}.
\]
From
\[
\alpha_1^{x+y-2} \geq F_x F_y > (F_x - 1)(F_y - 1) \geq F_z - 1 \geq \alpha_1^{z-2} - 1 > \alpha_1^{z-3}
\]
we see that $x+y > z - 1$ and thus $y \geq z/2$.
In order to get a similar correspondence for $x$ and $z$, we denote $d_1 := \gcd(F_y - 1, F_z - 1)$ and $d_2 := \gcd(F_x - 1, F_z - 1)$, such that $F_z - 1 \mid d_1 d_2$. Then we use Lemma \ref{lem:1} to obtain
\[
\alpha_1^{x-1} > F_x > F_x-1  \geq d_2 \geq \frac{F_z - 1}{d_1} \geq \frac{\alpha_1^{z-2} - 1}{\alpha_1^{\frac{kz}{k+1}}} > \alpha_1^{z -\frac{kz}{k+1} -3}
\]
and hence
\[
x > \left( 1 - \frac{k}{k+1} \right) z - 2,
\]
which we can write as $x > C_1 z$ for some small constant $C_1 < 1$ (depending only on $k$), when $z$ is sufficiently large.

Next, we do a Taylor series expansion for $c$ which was given by
\begin{equation}\label{eq:c-exp}
c = \sqrt{\frac{(F_y - 1)(F_z - 1)}{F_x - 1}}.
\end{equation}
Using the power sum representations of $F_x, F_y, F_z$, we get
\begin{equation*}\begin{split}
c = &\sqrt{f_1} \alpha_1^{(-x+y+z)/2} \\
& \cdot \left( 1 + (-1/f_1) \alpha_1^{-x} + (f_2/f_1) \alpha_2^x \alpha_1^{-x} + \dots + (f_k/f_1) \alpha_k^x \alpha_1^{-x}\right)^{-1/2} \\
& \cdot \left( 1 + (-1/f_1) \alpha_1^{-y} + (f_2/f_1) \alpha_2^y \alpha_1^{-y} + \dots + (f_k/f_1) \alpha_k^y \alpha_1^{-y} \right)^{1/2} \\
& \cdot \left( 1 + (-1/f_1) \alpha_1^{-z} + (f_2/f_1) \alpha_2^z \alpha_1^{-z} + \dots + (f_k/f_1) \alpha_k^z \alpha_1^{-z}\right)^{1/2}.
\end{split}\end{equation*}
We then use the binomial expansion to obtain
\begin{equation*}\begin{split}
\left( 1 \right.&\left.+ (-1/f_1) \alpha_1^{-x} + (f_2/f_1) \alpha_2^x \alpha_1^{-x} + \dots + (f_k/f_1) \alpha_k^x \alpha_1^{-x} \right)^{1/2} \\
&= \sum_{j=0}^T \binom{1/2}{j} \left((-1/f_1) \alpha_1^{-x} + (f_2/f_1) \alpha_2^x \alpha_1^{-x} + \dots + (f_k/f_1) \alpha_k^x \alpha_1^{-x} \right)^j \\
&\hspace*{1cm}+ \mathcal{O}(\alpha_1^{-(T+1) x}),
\end{split}\end{equation*}
where $\mathcal{O}$ has the usual meaning, using estimates from \cite{FuTi} and where $T$ is some index, which we will specify later. Since $x < z$ and $z < x/C_1$, the remainder term can also be written as $\mathcal{O}(\alpha_1^{-T \|x\|/C_1})$, where $\|x\|=\max\{x,y,z\}=z$. Doing the same for $y$ and $z$ likewise and multiplying those expression gives
\begin{equation}\label{eq:c-approx}
c=\sqrt{f_1} \alpha_1^{(-x+y+z)/2} \left( 1 + \sum_{j=1}^{n-1} d_j M_j\right) + \mathcal{O}(\alpha_1^{-T \|x\|/C_1}),
\end{equation}
where the integer $n$ depends only on $T$ and where $J$ is a finite set, $d_j$ are non-zero coefficients in ${\mathbb L}:={\mathbb Q}(\alpha_1,\ldots,\alpha_k)$, and $M_j$ is a monomial of the form
\[
M_j=\prod_{i=1}^k \alpha_i^{L_{i,j}({\bf x})},
\]
in which ${\bf x}=(x,y,z)$, and $L_{i,j}({\bf x})$ are linear forms in ${\bf x}\in {\mathbb R}^3$ with integer coefficients which are all non-negative if $i=2,\ldots,k$ and non-positive if $i=1$.
Note that each monomial $M_j$ is ``small", that is there exists a constant $\kappa > 0$ (which we can even choose independently of $k$), such that
\begin{equation}\label{eq:Mj}
|M_j| \leq e^{-\kappa x}\qquad {\text{\rm for ~all}}\qquad j\in J.
\end{equation}
This follows directly from
\begin{equation*}\begin{split}
|M_j| &= |\alpha_1|^{L_{1,j}({\bf x})} \cdot |\alpha_2|^{L_{2,j}({\bf x})} \cdots |\alpha_k|^{L_{k,j}({\bf x})} \\
&\leq (2 - 1/k)^{L_{1,j}({\bf x})} \cdot 1 \cdots 1 \\
&\leq (3/2)^{-x} \\
&\leq e^{-\kappa x}\qquad {\text{\rm for ~all}}\qquad j\in J.
\end{split}\end{equation*}

Our aim of this section is to apply a version of the Subspace Theorem given in \cite{Ev} to show that there is a finite expansion of $c$ involving terms as in \eqref{eq:c-approx}; the version we are going to use can also be found in Section 3 of \cite{Fu2}. For the set-up - in particular the notion of heights - we refer to the mentioned papers.

We work with the field ${\mathbb L}={\mathbb Q}(\alpha_1,\ldots,\alpha_k)$ and let $S$ be the finite set of infinite places (which are normalized so that the Product Formula holds, cf. \cite{Ev}). Observe that $\alpha_1,\ldots,\alpha_k$ are $S$-units. 
According to whether $-x+y+z$ is even or odd, we set $\epsilon = 0$ or $\epsilon = 1$ respectively, such that \[\alpha_1^{(-x+y+z-\epsilon)/2} \in \mathbb{L}.\] By going to a still infinite subset of the solutions we may assume that $\epsilon$ is always either $0$ or $1$.

Using the fixed integer $n$ (depending on $T$) from above, we now define $n+1$ linearly independent linear forms in indeterminants $(C, Y_0, \dots, Y_n)$.
For the standard infinite place $\infty$ on $\C$, we set
\begin{equation}
l_{0, \infty}(C, Y_0, \dots, Y_{n-1}) := C - \sqrt{f_1 \alpha_1^\epsilon} Y_0 - \sqrt{f_1 \alpha_1^\epsilon} \sum_{j=1}^{n-1} d_{j} Y_j,
\end{equation}
where $\epsilon \in \{0,1\}$ is as explained above, and
\[
l_{i, \infty}(C, Y_0, \dots, Y_{n-1}) := Y_{i-1} \quad \textrm{for } i \in \{1, \dots, n\}.
\]
For all other places $v$ in $S$, we define
\[
l_{0, v} := C, \qquad l_{i, v} := Y_{i-1} \quad \textrm{ for } i = 1, \dots, n.
\]
We will show, that there is some $\delta > 0$, such that the inequality
\begin{equation}\label{eq:lf-ineq}
\prod_{v\in S}\prod_{i=0}^{n}\frac{\vert l_{i,v}({\bf
y})\vert_{v}}{\vert {\bf y}\vert_{v}} <\left(\prod_{v\in S}\vert
\det(l_{0,v},\ldots,l_{n,v})\vert_{v}\right)\cdot \mathcal{H}({\bf
y})^{-(n+1)-\delta}\,
\end{equation}
is satisfied for all vectors
\[
{\bf y} = (c, \alpha_1^{(-x+y+z-\epsilon)/2}, \alpha_1^{(-x+y+z-\epsilon)/2} M_1, \dots, \alpha_1^{(-x+y+z-\epsilon)/2} M_{n-1}).
\]
The use of the correct $\epsilon \in \{0,1\}$ guarantees, that these vectors are indeed in $\mathbb{L}^n$.

First notice, that the determinant in \eqref{eq:lf-ineq} equals $1$ for all places $v$. Thus \eqref{eq:lf-ineq} reduces to
\[
\prod_{v\in S}\prod_{i=0}^{n}\frac{\vert l_{i,v}({\bf
y})\vert_{v}}{\vert {\bf y}\vert_{v}} < \mathcal{H}({\bf
y})^{-(n+1)-\delta},
\]
and the double product on the left-hand side can be split up into
\[
\vert c - \sqrt{f_1 \alpha_1^\epsilon} y_0 - \sqrt{f_1 \alpha_1^\epsilon} \sum_{j=1}^{n-1} d_{j} y_j \vert_\infty \cdot \prod_{\substack{v \in M_{\mathbb{L}, \infty}, \\ v \neq \infty}} \vert c \vert_v \cdot \prod_{v \in S \backslash M_{\mathbb{L}, \infty}} \vert c \vert_v \cdot \prod_{j=1}^{n-1} \prod_{v \in S} \vert y_j \vert_v.
\]
Now notice that the last double product equals $1$ due to the Product Formula and that
\[
\prod_{v \in S \backslash M_{\mathbb{L}, \infty}} \vert c \vert_v \leq 1,
\]
since $c \in \mathbb{Z}$.
An upper bound on the number of infinite places in $\mathbb{L}$ is $k!$ and hence
\begin{equation*}\begin{split}
\prod_{\substack{v \in M_{\mathbb{L}, \infty}, \\ v \neq \infty}} \vert c \vert_v &< \left( \frac{(T_y - 1) (T_z - 1)}{T_x - 1} \right)^{k!} \\
&\leq (f_1 \alpha_1^y + \cdots + f_k \alpha_k^y - 1)^{k!} (f_1 \alpha_1^z + \cdots + f_k \alpha_k^z - 1)^{k!} \\
&\leq (1 \cdot 2^{\| x \|} + 1/2)^{2 \cdot k!}
\end{split}\end{equation*}
using \eqref{eq:f1approx} and \eqref{eq:approx}.
And finally the first expression is just
\[
\Big| \sqrt{f_1 \alpha_1^\epsilon}  \alpha^{(-x+y+z-\epsilon)/2} \sum_{j \geq n} d_j M_j \Big|,
\]
which, by \eqref{eq:c-approx}, is smaller than some expression of the form $C_2 \alpha_1^{T \|x\| / C_1}$.
Therefore we have
\[
\prod_{v\in S}\prod_{i=0}^{n}\frac{\vert l_{i,v}({\bf
y})\vert_{v}}{\vert {\bf y}\vert_{v}} < C_2 \alpha_1^{T \|x\| / C_1} \cdot (2^{\| x \|} + 1/2)^{2 \cdot k!}.
\]

Now we choose $T$ (and the corresponding $n$) in such a way that
\[C_2 \alpha_1^{-T\|x\|/C_1} < \alpha_1^{-\frac{T\|x\|}{2C_1}}\]
and
\[(2^{\|x\|} + 1/2)^{2 \cdot k!} < \alpha_1^{\frac{T\|x\|}{4C_1}}\]
holds. Then we can write
\begin{equation}
\prod_{v\in S}\prod_{i=0}^{n}\frac{\vert l_{i,v}({\bf y})\vert_{v}}{\vert {\bf y}\vert_{v}} < \alpha_1^{\frac{-T \|x\|}{2C_1}}.
\end{equation}

For the height of our vector ${\bf y}$, we estimate
\begin{equation*}\begin{split}
\mathcal{H}({\bf y}) &\leq C_3 \cdot \mathcal{H}(c) \cdot \mathcal{H}(\alpha_1^\frac{-x+y+z-\epsilon}{2})^n \cdot \prod_{i=1}^{n-1} \mathcal{H}(M_j) \\
&\leq C_3(2^{\|x\|} + 1/2)^{k!} \prod_{i=1}^{n-1} \alpha_1^{C_4 \|x\|} \\
&\leq \alpha_1^{C_5 \|x\|},
\end{split}\end{equation*}
with suitable constants $C_3, C_4, C_5$. For the second estimate, we used that
\[
\mathcal{H}(M_j) \leq \mathcal{H}(\alpha_1)^{C_{\alpha_1}({\bf x})}\mathcal{H}(\alpha_2)^{C_{\alpha_2}({\bf x})} \cdots \mathcal{H}(\alpha_k)^{C_{\alpha_k}({\bf x})}
\]
and bounded it by the maximum of those expressions. Furthermore we have
\[
\mathcal{H}(\alpha_1^\frac{-x+y+z-\epsilon}{2})^n \leq \alpha_1^{n \|x\|},
\]
which just changes our constant $C_4$.

Now finally, the estimate
\[
\alpha_1^{-\frac{T \|x\|}{2C_1}} \leq \alpha_1^{-\delta C_5\|x\|}
\]
is satisfied, when we pick $\delta$ small enough.

So all the conditions for the Subspace Theorem are met. Since we assumed that there are infinitely many solutions $(x,y,z)$ of \eqref{eq:lf-ineq}, we now can conclude, that all of them lie in finitely many proper linear subspaces. Therefore, there must be at least one proper linear subspace, which contains infinitely many solutions and we see that there exists a finite set $J$  and (new) coefficients $e_j$ for $j\in J$ in ${\mathbb L}$ such that we have
\begin{equation}\label{eq:c}
c=\alpha_1^{(-x+y+z-\epsilon)/2} \left(e_0+\sum_{j\in J_c} e_j M_j\right)
\end{equation}
with (new) non-zero coefficients $e_j$ and monomials $M_j$ as before.

Likewise, we can find finite expressions of this form for $a$ and $b$.

\section{Proof of the theorem}\label{sec:5}

We use the following parametrization lemma:

\begin{lemma}
Suppose, we have infinitely many solutions for \eqref{eq:1}. Then there exists a line in $\mathbb{R}^3$ given by
\begin{displaymath}
x(t) = r_1 t + s_1 \quad y(t) = r_2 t + s_2 \quad z(t) = r_3 t + s_3
\end{displaymath}
with rationals $r_1, r_2, r_3, s_1, s_2, s_3$, such that infinitely many of the solutions $(x,y,z)$ are of the form $(x(n), y(n), z(n))$ for some integer $n$.
\end{lemma}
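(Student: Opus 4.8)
The plan is to combine the three finite expansions for $a$, $b$, $c$ obtained at the end of Section~\ref{sec:4} (the one for $c$ being displayed in \eqref{eq:c}) with the defining relations $ab+1 = F_x$, $ac+1 = F_y$, $bc+1 = F_z$ in order to extract a genuine $S$-unit equation in the exponentials $\alpha_i^{L({\bf x})}$. Concretely, from \eqref{eq:c} we have $c = \alpha_1^{(-x+y+z-\epsilon)/2}(e_0 + \sum_{j\in J_c} e_j M_j)$ and analogous expressions for $a$ and $b$ with exponents $(x+y-z-\epsilon_a)/2$, $(x-y+z-\epsilon_b)/2$ respectively; multiplying, say, the expansions for $b$ and $c$ and subtracting $F_z - 1 = bc$, and then using the Binet formula \eqref{eq:Binet} for $F_z$, yields an identity of the form $\sum_\nu g_\nu \prod_{i=1}^k \alpha_i^{\Lambda_{i,\nu}({\bf x})} = 0$, where the $g_\nu$ are fixed nonzero elements of $\mathbb{L}$ and the $\Lambda_{i,\nu}$ are linear forms in ${\bf x}=(x,y,z)$ with integer (or half-integer, after clearing the $\epsilon$'s by a further finite partition of the solution set) coefficients. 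Since $\alpha_1,\dots,\alpha_k$ are $S$-units in $\mathbb{L}$, each monomial $\prod_i \alpha_i^{\Lambda_{i,\nu}({\bf x})}$ is an $S$-unit, so this is an $S$-unit equation in finitely many unknowns of bounded number of terms.

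Next I would invoke the finiteness theorem for non-degenerate solutions of $S$-unit equations (the same circle of results already used to produce \eqref{eq:c}): a non-degenerate solution lies in one of finitely many sets, but since we have \emph{infinitely} many solutions $(x,y,z)$, infinitely many of them must be degenerate, i.e. for infinitely many ${\bf x}$ some proper vanishing subsum occurs. Passing to a still-infinite subset, we may fix which subsums vanish and which ratios of monomials are constant. A ratio $\prod_i \alpha_i^{\Lambda_{i,\nu}({\bf x}) - \Lambda_{i,\mu}({\bf x})}$ being constant along infinitely many integer points forces, by the multiplicative independence of any $k-1$ of the $\alpha_i$ (Lemma~\ref{prop:multindep}) together with $\alpha_1\cdots\alpha_k = (-1)^{k-1}$, that the exponent vector $(\Lambda_{i,\nu}({\bf x}) - \Lambda_{i,\mu}({\bf x}))_i$ is proportional to $(1,1,\dots,1)$ for those ${\bf x}$; equivalently a nontrivial integer linear form in $(x,y,z)$ vanishes, or is constant, on infinitely many of the solution triples. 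That is exactly one linear constraint on $(x,y,z)$.

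To get a full line rather than merely one hyperplane, I would iterate: the infinitely many solutions now lie in a rational plane; repeating the argument with a second vanishing subsum (or with the analogous $S$-unit equation coming from the product of the expansions for $a$ and $b$, or $a$ and $c$) produces a second independent linear relation, cutting the solution locus down to a line. Along that line $x,y,z$ are affine-linear in a single parameter $t$ with rational coefficients, and infinitely many of the actual solutions correspond to integer values $t=n$; reparametrizing so that these integer values of the original exponents are hit gives precisely the claimed form $x(t) = r_1 t + s_1$, $y(t) = r_2 t + s_2$, $z(t) = r_3 t + s_3$.

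The main obstacle I anticipate is bookkeeping in the degenerate case: one must check that the vanishing subsums actually produce \emph{nontrivial} linear relations (i.e. that one cannot have all exponent differences already identically zero, which would make the "relation" vacuous) and that two successive applications yield \emph{independent} relations rather than the same one twice. Ruling this out uses that the leading term $\sqrt{f_1}\,\alpha_1^{(-x+y+z-\epsilon)/2}$ (respectively its analogues for $a,b$) genuinely varies with ${\bf x}$ — guaranteed because $x,y,z \to \infty$ along the solutions and $x > C_1 z$, $y \ge z/2$ keep the three exponents from being eventually constant — so the dominant monomials in the three $S$-unit equations cannot all be swallowed into a single linear relation. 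Handling the half-integer exponents coming from $\epsilon_a,\epsilon_b,\epsilon_c \in \{0,1\}$ is routine: pass to one of the finitely many residue classes of $(x,y,z)$ modulo $2$ so that all the square roots live in $\mathbb{L}$ simultaneously, at the cost of replacing "infinitely many solutions" by "infinitely many solutions in a fixed class", which is harmless.
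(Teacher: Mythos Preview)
Your plan follows the same route as the paper: form an $S$-unit equation from $bc+1=F_z$ by inserting the finite expansions \eqref{eq:c} and the Binet formula, invoke the Evertse--Schlickewei--Schmidt finiteness theorem together with the multiplicative independence of the roots to extract a linear constraint on ${\bf x}$, then iterate to descend from three free parameters to one. Two points of comparison are worth making.

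First, the paper eliminates $\alpha_k$ via $\alpha_k=(-1)^{k-1}/(\alpha_1\cdots\alpha_{k-1})$ \emph{before} appealing to independence, so that a constant ratio $\prod_{i=1}^{k-1}\alpha_i^{f_i({\bf x})}$ directly forces each exponent $f_i({\bf x})$ to be constant. Your variant (keep all $k$ roots and conclude that the exponent-difference vector is proportional to $(1,\dots,1)$) is equivalent but requires the extra remark that two monomials differing only by a power of $\alpha_1\cdots\alpha_k$ are $\pm$ the same monomial and hence should already have been merged.

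Second, and more substantively, the paper does \emph{not} obtain the second constraint from another product ($ab$ or $ac$) or from a different vanishing subsum. Instead, after the first relation it parametrizes the resulting rational plane by two integer parameters $(\lambda,\mu)$, rewrites the \emph{same} $S$-unit equation in these new variables, and reapplies the $S$-unit theorem to get a linear relation in $(\lambda,\mu)$. This neatly avoids the obstacle you correctly flag---whether two successive applications produce \emph{independent} relations---because the second relation is automatically a constraint in the residual two-dimensional parameter space. Your alternative route could be pushed through using that the leading exponents $x{+}y{-}z$, $x{-}y{+}z$, $-x{+}y{+}z$ are linearly independent, but the paper's reparametrize-and-repeat device makes that check unnecessary.
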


\begin{proof}
Assume that \eqref{eq:1} has infinitely many solutions. We already deduced in Section \ref{sec:4} that $c$ can be written in the form
\begin{displaymath}
c=\alpha_1^{(-x+y+z-\epsilon)/2} \left(e_{c,0}+\sum_{j\in J_c} e_{c,j} M_{c,j}\right)
\end{displaymath}
with $J_c$ being a finite set, $e_{c,j}$ being coefficients in $\mathbb{L}$ for $j \in J_c \cup \{0\}$ and $M_{c,j} = \prod_{i=1}^k \alpha_i^{L_{c,i,j}({\bf x})}$ with ${\bf x}=(x,y,z)$.
In the same manner, we can write
\begin{displaymath}
b=\alpha_1^{(x-y+z-\epsilon)/2} \left(e_{b,0}+\sum_{j\in J_b} e_{b,j} M_{b,j}\right).
\end{displaymath}
Since $1 + bc = F_z = f_1 \alpha_1^z + \cdots + f_k \alpha_k^z$, we get
\begin{equation}\label{eq:s-uniteq}
f_1 \alpha_1^z + \cdots + f_k \alpha_k^z - \alpha_1^{z-\varepsilon}  \left(e_{b,0}+\sum_{j\in J_b} e_{b,j} M_{b,j}\right)\left(e_{c,0}+\sum_{j\in J_c} e_{c,j} M_{c,j}\right) = 1.
\end{equation}
Substituting
\begin{displaymath}
\alpha_k = \frac{(-1)^{k-1}}{\alpha_1 \cdots \alpha_{k-1}}
\end{displaymath}
into \eqref{eq:s-uniteq}, we obtain an equation of the form
\begin{equation}\label{eq:s-uniteq2}
\sum_{j \in  J} e_j \alpha_1^{L_{1,j}({\bf x})} \cdots \alpha_{k-1}^{L_{k-1,j}({\bf x})} = 0,
\end{equation}
where again $J$ is some finite set, $e_j$ are non-zero coefficients in $\mathbb{L}$ and $L_{i,j}$ are linear forms in ${\bf x}$ with integer coefficients.

This is an $S$-unit equation, where $S$ is the multiplicative group generated by $\{\alpha_1, \dots, \alpha_k, -1\}$. We may assume that infinitely many of the solutions ${\bf x}$ are non-degenerate solutions of (\ref{eq:s-uniteq2}) by replacing the equation by an equation given by a suitable vanishing subsum if necessary.

We may assume, that $(L_{1,i}, \dots, L_{k-1,i}) \neq (L_{1,j}, \dots, L_{k-1,j})$ for any $i \neq j$, because otherwise we could just merge these two terms.

Therefore for $i \neq j$, the theorem on non-degenerate solutions to $S$-unit equations (see \cite{ESS}) yields that the set of \[\alpha_1^{L_{1,i}({\bf x}) - L_{1,j}({\bf x})} \cdots \alpha_{k-1}^{L_{k-1,i}({\bf x}) - L_{k-1,j}({\bf x})}\] is contained in a finite set of numbers.
By Lemma \ref{prop:multindep}, $\alpha_1, \dots, \alpha_{k-1}$ are multiplicatively independent and thus the exponents $(L_{1,i} - L_{1,j})({\bf x}), \ldots, (L_{k-1,i} - L_{k-1,j})({\bf x})$ take the same value for infinitely many ${\bf x}$. Since we assumed, that these linear forms are not all identically zero, this implies, that there is some non-trivial linear form $L$ defined over $\Q$ and some $c\in\mathbb{Q}$ with $L({\bf x}) = c$ for infinitely many ${\bf x}$.
So there exist rationals $r_i, s_i, t_i$ for $i = 1, 2, 3$ such that we can parametrise
\begin{displaymath}
x = r_1 p + s_1 q + t_1, \quad y = r_2 p + s_2 q + t_2, \quad z = r_3 p + s_3 q + t_3
\end{displaymath}
with infinitely many pairs $(p,q) \in \mathbb{Z}^2$.

We can assume, that $r_i, s_i, t_i$ are all integers. If not, we define $\Delta$ as the least common multiple of the denominators of $r_i, s_i$ ($i= 1, 2,3$) and let $p_0, q_0$ be such that for infinitely many pairs $(p, q)$ we have $p \equiv p_0 \mod \Delta$ and $q \equiv q_0 \mod \Delta$. Then $p = p_0 + \Delta \lambda, q = q_0 + \Delta \mu$ and
\begin{equation*}\begin{split}
x & =  (r_1\Delta) \lambda+(s_1\Delta) \mu+(r_1p_0+s_1q_0+t_1)\\
y & =  (r_2\Delta) \lambda+(s_2\Delta) \mu+(r_2p_0+s_2q_0+t_2)\\
z & =  (r_3\Delta) \lambda+(s_3\Delta) \mu+(r_3p_0+s_3q_0+t_3).
\end{split}\end{equation*}
Since $r_i \Delta$, $s_i \Delta$ and $x, y, z$ are all integers, $r_i p_0 + s_i q_0 + t_i$ are integers as well. Replacing $r_i$ by $r_i \Delta$, $s_i$ by $s_i \Delta$ and $t_i$ by $r_i p_0 + s_i q_0 + t_i$, we can indeed assume, that all coefficients $r_i, s_i, t_i$ in our parametrization are integers.

Using a similar argument as in the beginning of the proof, we get that our equation is of the form
\begin{displaymath}
\sum_{j \in J} e'_j \alpha_1^{L'_{1,j}({\bf r})} \cdots \alpha_{k-1}^{L'_{k-1,j}({\bf r})} = 0,
\end{displaymath}
where ${\bf r} := (\lambda, \mu)$, $J$ is a finite set of indices, $e_j'$ are new non-zero coefficients in $\mathbb{L}$ and $L'_{i,j}({\bf r})$ are linear forms in ${\bf r}$ with integer coefficients.
Again we may assume that we have $(L'_{1,i}({\bf r}), \dots, L'_{k-1,i}({\bf r})) \neq (L'_{1,j}({\bf r}), \dots, L'_{k-1,j}({\bf r}))$ for any $i \neq j$.

Applying the theorem of non-degenerate solutions to $S$-unit equations once more, we obtain a finite set of numbers $\Lambda$, such that for some $i \neq j$, we have
\begin{displaymath}
\alpha_1^{(L'_{1,i} - L'_{1,j})({\bf r})} \cdots \alpha_{k-1}^{(L'_{k-1,i} - L'_{k-1,j})({\bf r})} \in \Lambda.
\end{displaymath}
So every ${\bf r}$ lies on a finite collection of lines and since we had infinitely many ${\bf r}$, there must be some line, which contains infinitely many solution, which proves our lemma.
\end{proof}

We apply this lemma and define $\Delta$ as the least common multiple of the denominators of $r_1, r_2, r_3$. Infinitely many of our $n$ will be in the same residue class modulo $\Delta$, which we shall call $r$.
Writing $n = m \Delta + r$, we get
\begin{displaymath}
(x,y,z) = ((r_1 \Delta) m + (r r_1 + s_1), (r_2 \Delta) m + (r r_2 + s_2), (r_3 \Delta) m + (r r_3 + s_3) ).
\end{displaymath}
Replacing $n$ by $m$, $r_i$ by $r_i \Delta$ and $s_i$ by $r r_i + s$, we can even assume, that $r_i, s_i$ are integers.
So we have
\begin{displaymath}
\frac{-x+y+z-\epsilon}{2} = \frac{(-r_1+r_2+r_3)m}{2} + \frac{-s_1+s_2+s_3 - \epsilon}{2}.
\end{displaymath}
This holds for infinitely many $m$, so we can choose a still infinite subset such that all of them are in the same residue class $\delta$ modulo $2$ and we can write $m = 2 \ell + \delta$ with fixed $\delta \in \{0,1\}$.
Thus we have
\begin{displaymath}
\frac{-x+y+z-\epsilon}{2} = (-r_1 + r_2 + r_3) \ell + \eta,
\end{displaymath}
where $\eta \in \mathbb{Z}$ or $\eta \in \mathbb{Z} + 1/2$.

Using this representation, we can write \eqref{eq:c} as
\begin{equation}\label{eq:c-par}
c(\ell) = \alpha_1^{(-r_1+r_2+r_3)\ell + S} \left(e_0+\sum_{j\in J_c} e_j M_j\right).
\end{equation}
for infinitely many $\ell$, where
\[
M_j=\prod_{i=1}^k \alpha_i^{L_{i,j}({\bf x})},
\]
as before and ${\bf x} = {\bf x}(\ell) = (x(2\ell + \delta), y(2\ell + \delta), z(2\ell + \delta))$.
From this, we will now derive a contradiction.

First we observe, that there are only finitely many solutions of \eqref{eq:c-par} with
$c(\ell) = 0$. That can be shown by using the fact, that a simple non-degenerate linear recurrence has only finite zero-multiplicity (see \cite{ESS} for an explicit bound).
We will apply this statement here for the linear recurrence in $\ell$; it only remains to check, that no quotient of two distinct roots of the form $\alpha_1^{L_{1,i}({\bf x}(\ell))} \cdots \alpha_k^{L_{k,i}({\bf x}(\ell))}$ is a root of unity or, in other words, that
\begin{equation}\label{eq:unit-eq}
(\alpha_1^{m_1} \alpha_2^{m_2} \cdots \alpha_k^{m_k})^n = 1
\end{equation}
has no solutions in $n \in \mathbb{Z}/\{0\}$, $m_1 < 0$ and $m_i > 0$ for $i = 2, \dots, k$. Assume relation \eqref{eq:unit-eq} holds. Replacing $\alpha_1$ by $(-1)^{k-1}(\alpha_2\cdots\alpha_k)^{-1}$ gives \[\alpha_2^{2(m_2-m_1)}\cdots\alpha_k^{2(m_k-m_1)}=1.\] By squaring this equation and applying Lemma \ref{prop:multindep} we get $2(m_2-m_1)=\cdots=2(m_k-m_1)=0$ and thus $m_1=m_2=\cdots=m_k$, which is impossible because of the signs of $m_1$ and $m_2,\ldots,m_k$.

So we have confirmed, that $c(\ell) \neq 0$ for still infinitely many solutions.
We use \eqref{eq:c-exp} and write
\begin{equation}\label{eq:xcyz}
(F_x - 1) c^2 = (F_y - 1)(F_z - 1).
\end{equation}
Then we insert the finite expansion \eqref{eq:c-par} in $\ell$ for $c$ into \eqref{eq:xcyz}.
Furthermore, we use the Binet formula \eqref{eq:Binet} and write $F_x, F_y, F_z$ as power sums in $x$, $y$ and $z$ respectively.
Using the parametrization $(x,y,z) = (r_1 m + s_1, r_2 m + s_2, r_3 m + s_3)$ with $m = 2\ell$ or $m = 2\ell+1$ as above, we have expansions in $\ell$ on both sides of \eqref{eq:Binet}. Since there must be infinitely many solutions in $\ell$, the largest terms on both sides have to grow with the same rate.
In order to find the largest terms, we have to distinguish some cases: If we assume, that $e_0 \neq 0$ for infinitely many of our solutions, then $e_0 \alpha_1^{(-x+y+z-\epsilon)/2}$ is the largest term in the expansion of $c$ and we have
\begin{displaymath}
f_1 \alpha_1^x e_0^2 \alpha_1^{-x+y+z-\epsilon} = f_1 \alpha_1^y f_1 \alpha_1^z.
\end{displaymath}
It follows that $e_0^2 = f_1 \alpha_1^{\epsilon}$. The case $e_0 = 0$ for infinitely many of our solutions
is not possible, because then, the right-hand side of \eqref{eq:xcyz} would grow faster than the left-hand side so that \eqref{eq:xcyz} could be true for only finitely many of our $\ell$.
In the other cases, we have $e_0 = \sqrt{f_1 \alpha_1^\epsilon}$, where $\epsilon \in \{0,1\}$. This now contradicts the following lemma, which turns out to be slightly more involved than in the special case on Tribonacci numbers (cf. \cite{FuLuSz1}).

\begin{lemma}
$\sqrt{f_1} \notin \mathbb{L}$ and $\sqrt{f_1\alpha_1} \notin \mathbb{L}$.
\end{lemma}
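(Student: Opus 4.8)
The plan is first to rewrite $f_1$ so that its class modulo squares in $\mathbb{L}$ becomes transparent. Differentiating the identity $(X-1)\Psi_k(X)=X^{k+1}-2X^k+1$ and evaluating at $X=\alpha_1$, where $\Psi_k(\alpha_1)=0$, gives $(\alpha_1-1)\Psi_k'(\alpha_1)=\alpha_1^{k-1}\bigl((k+1)\alpha_1-2k\bigr)$; since the denominator of $f_1$ in \eqref{eq:coefficient} is precisely $2+(k+1)(\alpha_1-2)=(k+1)\alpha_1-2k$, this yields
\[
f_1=\frac{\alpha_1^{k-2}}{\Psi_k'(\alpha_1)},\qquad\text{where}\qquad \Psi_k'(\alpha_1)=\prod_{i=2}^{k}(\alpha_1-\alpha_i).
\]
Each $\alpha_i$ is a unit of $\mathcal{O}_\mathbb{L}$ (as noted in the proof of Lemma \ref{prop:multindep}), so for $\epsilon\in\{0,1\}$ we have $f_1\alpha_1^\epsilon=\alpha_1^{k-2+\epsilon}\,\Psi_k'(\alpha_1)\cdot\Psi_k'(\alpha_1)^{-2}$, whence $v_\mathfrak{P}(f_1\alpha_1^\epsilon)\equiv v_\mathfrak{P}(\Psi_k'(\alpha_1))\pmod 2$ at every finite place $\mathfrak{P}$ of $\mathbb{L}$. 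Consequently it is enough to exhibit one finite place $\mathfrak{P}$ of $\mathbb{L}$ at which $\Psi_k'(\alpha_1)$ has odd valuation: then neither $f_1$ nor $f_1\alpha_1$ is a square in $\mathbb{L}$, and both assertions of the lemma follow at once.

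To get hold of such a place I would bring in the discriminant of $\Psi_k$. As $\Psi_k'$ has rational coefficients, $N_{\mathbb{K}/\mathbb{Q}}(\Psi_k'(\alpha_1))=\prod_{i=1}^{k}\Psi_k'(\alpha_i)=(-1)^{k(k-1)/2}\operatorname{disc}(\Psi_k)$ with $\mathbb{K}=\mathbb{Q}(\alpha_1)$. Using $(X-1)\Psi_k(X)=X^{k+1}-2X^k+1$ one has $\operatorname{disc}\bigl((X-1)\Psi_k\bigr)=\Psi_k(1)^2\operatorname{disc}(\Psi_k)=(k-1)^2\operatorname{disc}(\Psi_k)$, and comparing with the standard formula for the discriminant of the trinomial $X^{k+1}-2X^k+1$ gives
\[
\operatorname{disc}(\Psi_k)=(-1)^{k(k+1)/2}\,\frac{(k+1)^{k+1}-2^{k+1}k^k}{(k-1)^2}.
\]
(For instance $k=2$ gives $\operatorname{disc}(\Psi_2)=5$ and $f_1=1/\sqrt5$, $\mathbb{L}=\mathbb{Q}(\sqrt5)$, where the claim is immediate.) In particular, for a rational prime $p$ one has $\sum_{\mathfrak{q}\mid p}f(\mathfrak{q}/p)\,v_\mathfrak{q}(\Psi_k'(\alpha_1))=v_p(\operatorname{disc}(\Psi_k))$, the sum over the primes $\mathfrak{q}$ of $\mathbb{K}$ above $p$, so the arithmetic of $\operatorname{disc}(\Psi_k)$ controls the valuations of $\Psi_k'(\alpha_1)$.

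Now suppose we have a rational prime $p$ with $v_p(\operatorname{disc}(\Psi_k))=1$. Then $p$ is odd and tamely ramified in $\mathbb{K}$ (hence in $\mathbb{L}$), $\Psi_k\equiv(X-\rho)^2\bar h(X)\pmod p$ with $\rho\in\mathbb{F}_p$, $\bar h$ separable and $\bar h(\rho)\ne0$, and $p\nmid[\mathcal{O}_\mathbb{K}:\mathbb{Z}[\alpha_1]]$; by standard ramification theory the inertia group at any prime $\mathfrak{P}\mid p$ of $\mathbb{L}$ is generated by the transposition of the two roots of $\Psi_k$ that reduce to $\rho$, and $e(\mathfrak{P}/p)=2$. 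Replacing $\mathfrak{P}$ by a suitable $G$-conjugate we may assume that $\alpha_1$ is one of those two roots, i.e. $\alpha_1\equiv\rho\pmod{\mathfrak{P}}$ and $\alpha_c\equiv\rho\pmod{\mathfrak{P}}$ for exactly one further index $c$, all the other $\alpha_1-\alpha_i$ being units at $\mathfrak{P}$. Then $v_\mathfrak{P}(\Psi_k'(\alpha_1))=v_\mathfrak{P}(\alpha_1-\alpha_c)$, and since $(\alpha_1-\alpha_c)^2$ is the only factor of $\operatorname{disc}(\Psi_k)=\prod_{i<j}(\alpha_i-\alpha_j)^2$ not a unit at $\mathfrak{P}$, we get $2\,v_\mathfrak{P}(\alpha_1-\alpha_c)=v_\mathfrak{P}(\operatorname{disc}(\Psi_k))=e(\mathfrak{P}/p)\,v_p(\operatorname{disc}(\Psi_k))=2$. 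Hence $v_\mathfrak{P}(\Psi_k'(\alpha_1))=1$ is odd, which is exactly what the first paragraph reduced the lemma to.

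The step I expect to be the real obstacle is therefore producing a prime $p$ with $v_p(\operatorname{disc}(\Psi_k))=1$ — or at least odd, at the cost of a more careful local analysis. By the displayed formula this is an assertion purely about the integer $(k+1)^{k+1}-2^{k+1}k^k$: one needs that, after dividing out the predictable factor $(k-1)^2$, the quotient has a prime factor to an odd (ideally first) power — in effect, that it is essentially squarefree and not $\pm$ a perfect square. If $\operatorname{disc}(\Psi_k)$ were a perfect square the present approach would break down entirely and a different idea would be needed, so this really is the crux; it is precisely the Diophantine statement alluded to in the introduction, to be settled using the analyses of \cite{CiLu} and \cite{Mar}. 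By contrast the reduction in the first paragraph and the inertia computation above are routine. A final cautionary remark: the small primes $p=2,3$ (which may be wildly ramified or exceptional) and small values of $k$ should be handled separately.
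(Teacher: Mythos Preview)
Your strategy is valid but follows a genuinely different route from the paper's. The paper argues by a direct norm computation: assuming $f_1\alpha_1^\epsilon=\beta^2$, it evaluates $|N_{\mathbb{K}/\mathbb{Q}}(\alpha_1)|=1$, $|N_{\mathbb{K}/\mathbb{Q}}(\alpha_1-1)|=k-1$, and $|N_{\mathbb{K}/\mathbb{Q}}((k+1)\alpha_1-2k)|=(2^{k+1}k^k-(k+1)^{k+1})/(k-1)$ straight from \eqref{eq:coefficient}, and concludes that $(2^{k+1}k^k-(k+1)^{k+1})/(k-1)^2$ would have to be a rational square. This quantity is exactly $|\operatorname{disc}(\Psi_k)|$, so both approaches land on the very same Diophantine obstruction, which the paper then settles by a short elementary argument modulo $4$ (the cases $k\equiv 1,2\pmod 4$ being already in \cite{CiLu}, \cite{Mar}). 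The norm computation is considerably shorter than your route through the identity $f_1=\alpha_1^{k-2}/\Psi_k'(\alpha_1)$, the discriminant formula, and the ramification-theoretic inertia analysis. On the other hand, your valuation-theoretic reduction has a genuine robustness advantage: once a prime $\mathfrak{P}$ of $\mathbb{L}$ with odd $v_\mathfrak{P}(\Psi_k'(\alpha_1))$ is exhibited, non-squareness in $\mathbb{L}^\times$ follows immediately, with no need to worry about whether the putative $\beta$ already lies in $\mathbb{K}=\mathbb{Q}(\alpha_1)$ or only in $\mathbb{L}$ (a point the norm argument must address, since $[\mathbb{L}:\mathbb{K}]$ is typically even). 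The price you pay is that ``$\operatorname{disc}(\Psi_k)$ is not a square'' alone is not quite enough: you need either some $p$ with $v_p(\operatorname{disc}(\Psi_k))=1$ (your inertia argument then works cleanly) or control of $e(\mathfrak{P}/\mathfrak{q})$ in $\mathbb{L}/\mathbb{K}$ to ensure odd valuation persists upstairs---precisely the ``more careful local analysis'' you flag but do not carry out.
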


\begin{proof}
Suppose that $\sqrt{f_1\alpha_1^{\epsilon}}\in\mathbb{L}$ for some $\epsilon\in\{0,1\}$. Then there is $\beta\in\mathbb{L}$ such that $f_1\alpha_1^\epsilon=\beta^2$. Using \eqref{eq:coefficient}, we get that
\[
\frac{(\alpha_1-1)\alpha_1^{-\epsilon}}{2+(k+1)(\alpha_1-2)}=\beta^2.
\]
Computing norms over ${\mathbb Q}$, we get that
\begin{equation}
\label{eq:dio}
\left|\frac{N_{{\mathbb L}/{\mathbb Q}}(\alpha_1)^{-\epsilon} N_{{\mathbb L}/{\mathbb Q}}(\alpha_1-1)}{N_{{\mathbb L}/{\mathbb Q}}(2+(k+1)(\alpha_1-2))}\right|=
N_{{\mathbb L}/{\mathbb Q}}(\beta)^2=\square,
\end{equation}
where $\square$ denotes a rational square. Note that
\[
\left| N_{{\mathbb L}/{\mathbb Q}}(\alpha_1)\right|=\left| \prod_{i=1}^k \alpha_i\right|=|(-1)^k \cdot (-1)|=1,
\]
and
\[
\left|N_{{\mathbb L}/{\mathbb Q}}(\alpha_1-1)\right|=\left|\prod_{i=1}^k (\alpha_i-1)\right|=\left|\Psi_k(1)\right|=k-1,
\]
and finally that
\begin{equation*}\begin{split}
 \left|N_{{\mathbb L}/{\mathbb Q}}(2+(k+1)(\alpha_1-2))\right| & = \left|N_{{\mathbb L}/{\mathbb Q}}((k+1)\alpha_1-2k)\right|\\
& =  \left|\prod_{i=1}^k ((k+1)\alpha_i-2k)\right|\\
& =  (k+1)^{k} \left|\prod_{i=1}^k (2k/(k+1)-\alpha_i)\right|\\
& =  (k+1)^{k}  \left|\Psi_k(2k/(k+1))\right|\\
& =  (k+1)^k\left| \frac{X^{k+1}-2X^k+1}{X-1}\Big|_{X=2k/(k+1)}\right|\\
& =  \frac{2^{k+1} k^k-(k+1)^{k+1}}{k-1}.
\end{split}\end{equation*}
Hence, we get that equation \eqref{eq:dio} leads to
\[
\frac{2^{k+1} k^k-(k+1)^{k+1}}{(k-1)^2}=\square.
\]
This leads to
\begin{equation}
\label{eq:k}
2^{k+1} k^k-(k+1)^{k+1}=w^2
\end{equation}
for some integer $w$. But this equation has no integer solutions, which is proved in the theorem below. This concludes the proof.
\end{proof}

In order to finish the proof we have the following result, which might be of independent interest since particular cases were considered before in \cite{CiLu} and \cite{Mar}.

\begin{theorem}
The Diophantine equation \eqref{eq:k} has no positive integer solutions $(k,w)$ with $k\ge 2$.
\end{theorem}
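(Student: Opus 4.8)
The plan is to study $2^{k+1}k^k - (k+1)^{k+1} = w^2$ by reducing modulo a well-chosen small modulus and deriving a contradiction from quadratic-residue considerations. A first useful observation is to rewrite the left-hand side: setting $N_k := 2^{k+1}k^k - (k+1)^{k+1}$, one checks quickly that $N_2 = 2^3\cdot 2^2 - 3^3 = 32-27 = 5$, which is not a square, $N_3 = 2^4\cdot 27 - 4^4 = 432-256 = 176$, not a square, $N_4 = 2^5\cdot 256 - 5^5 = 8192 - 3125 = 5067$, not a square, etc. So for small $k$ one verifies the claim by hand; the content is in ruling out large $k$.

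**Main approach via congruences.** The natural idea is to split into the parity of $k$. If $k$ is \emph{even}, then $k^k$ is a perfect square and $(k+1)^{k+1} = (k+1)\cdot\big((k+1)^{k/2}\big)^2$, so modulo the right primes one can compare Legendre symbols: $N_k \equiv -(k+1)\cdot(\text{square}) \pmod{\text{anything dividing }k}$, hence if $p\mid k$ then $N_k \equiv -(k+1)^{k+1}\pmod p$ and we need $-(k+1)^{k+1}$, equivalently $-(k+1)$ when $k+1$ is odd (which it is, since $k$ is even), to be a QR mod $p$. This must hold for \emph{every} prime $p\mid k$, which is already very restrictive. If $k$ is \emph{odd}, then $2^{k+1} = (2^{(k+1)/2})^2$ is a square and $(k+1)^{k+1}$ is a square, so $N_k \equiv 2^{k+1}k^k \equiv k^k\cdot\square \pmod{k+1}$; since $k$ is odd, $k^k \equiv k \pmod{k+1}$ times a square, so we need $k\equiv -1$ times... more precisely $N_k \equiv k\cdot(\text{square}) \pmod{q}$ for every prime $q\mid k+1$, forcing $k$ (i.e. $-1$) to be a QR modulo every such $q$. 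The strongest leverage should come from choosing $p=2$ or a small fixed modulus: reduce $N_k$ modulo $8$ or modulo $16$ and show $w^2$ cannot match for $k$ in certain residue classes, then handle the remaining classes with another modulus.

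**Carrying it out.** Concretely I would: (1) dispose of $k=2,3,4,5$ (maybe a few more) by direct computation; (2) for $k\ge 2$ reduce $N_k$ modulo $8$: note $2^{k+1}k^k \equiv 0\pmod 8$ for $k\ge 2$ (since $k+1\ge 3$), so $N_k \equiv -(k+1)^{k+1}\pmod 8$, and a square is $0,1,4\pmod 8$; this forces $(k+1)^{k+1}\equiv 0,4,7\pmod 8$, i.e. kills the cases where $(k+1)^{k+1}\equiv 1,2,3,5,6\pmod 8$. Since $k+1$ odd gives $(k+1)^{k+1}\equiv (k+1)^{\pm1}\equiv 1,3,5,7\pmod 8$ depending on $k+1\bmod 8$, this already eliminates $k+1\equiv 3,5\pmod 8$, i.e. $k\equiv 2,4\pmod 8$, among even $k$; (3) push to modulus $16$ or $32$ to eliminate further classes of even $k$; (4) for odd $k$, use a prime divisor $q$ of $k+1$ together with the fact that $k\equiv -1\pmod q$ so we'd need $-1$ to be a QR mod $q$ for \emph{all} primes $q\mid k+1$ — combined with a mod-$8$ argument on $k+1$ this should leave only $k+1$ whose prime factors are all $\equiv 1\pmod 4$, and then a further descent or size argument finishes. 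Alternatively, factor $N_k$ over a quadratic field or bound $w^2$ between consecutive squares for the dominant term: since $2^{k+1}k^k$ dominates $(k+1)^{k+1}$ for large $k$ (because $2k/(k+1)\to 2 > $ the relevant ratio... actually one must check growth rates carefully, as $4^k k^k$ vs $(k+1)^{2k+2}$ — here $(k+1)^{k+1}$ is genuinely smaller), one may hope $N_k$ lies strictly between two consecutive squares near $2^{(k+1)/2}k^{k/2}$, but this is delicate and probably only works after pulling out square factors.

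**Main obstacle.** The hard part will be handling the case where $k+1$ (in the odd-$k$ case) or $k$ (in the even-$k$ case) has all prime factors in the ``allowed'' residue classes, so that naive Legendre-symbol obstructions vanish; there one needs either a cleverly chosen fixed modulus that works uniformly (I expect modulo $16$ or $32$ combined with modulo $3$ or $5$ to suffice, after the parity split), or a genuine $2$-adic valuation / descent argument on the equation $2^{k+1}k^k = (k+1)^{k+1} + w^2$. Getting a single modulus (or small finite set of moduli) that eliminates \emph{all} sufficiently large $k$ is the crux; I would experiment with moduli $8,16,32,3,5,7$ and their combinations via CRT to pin down the contradiction.
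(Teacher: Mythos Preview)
Your proposal is a plan rather than a proof, and the plan has a genuine gap that your suggested fix (``experiment with moduli $8,16,32,3,5,7$'') cannot close. Your mod-$8$ computation is essentially correct and in fact disposes of all of $k\equiv 0,2,4\pmod 8$ (you understated it: the case $k+1\equiv 1\pmod 8$ also gives $N_k\equiv 7\pmod 8$), so in particular the whole class $k\equiv 0\pmod 4$ falls, matching the paper's one-line mod-$4$ argument there. But for odd $k$ your idea of testing primes $q\mid k+1$ yields only the constraint that every odd prime divisor of $k+1$ is $\equiv 1\pmod 4$. This is far from a contradiction: infinitely many $k\equiv 3\pmod 4$ satisfy it (e.g.\ $k=3,7,15,19,31,39,\ldots$), and they do not lie in any single residue class modulo a fixed integer, so no finite collection of congruences will finish the job.

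The missing idea for $k\equiv 3\pmod 4$ is to look at primes dividing $k$ rather than $k+1$, after first \emph{dividing out} the exact power of~$2$. Since $k$ is odd and $v_2(k+1)\ge 2$, one has $v_2\!\left(2^{k+1}k^k-(k+1)^{k+1}\right)=k+1$; writing $w=2^{(k+1)/2}w_1$ gives
\[
k^k \;=\; w_1^2+\left(\frac{k+1}{2}\right)^{k+1},
\]
a sum of two \emph{coprime} squares (any common prime would divide both $k$ and $k+1$). Hence every prime factor of $k$ is $\equiv 1\pmod 4$, contradicting $k\equiv 3\pmod 4$. This is the step your plan never reaches. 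As for the remaining classes $k\equiv 1,2\pmod 4$, the paper does not argue from scratch either: it invokes the earlier work of Cipu--Luca and Martin, where these cases were already settled; small-modulus sieving alone does not suffice there, and your proposal offers no substitute argument.
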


\begin{proof}
The cases $k\equiv 1,2\pmod 4$ have already been treated both in \cite{CiLu} and in \cite{Mar}. We treat the remaining cases. If $k\equiv 0\pmod 4$, then the left-hand side of \eqref{eq:k}
is congruent to $-1\pmod 4$, and therefore it cannot be a square. Finally, assume that $k\equiv 3\pmod 4$. Then $k+1$ is even, $2^{(k+1)/2}\mid w$, and putting $w_1=w/2^{(k+1)/2}$, we get
\[
k^k-((k+1)/2)^{k+1}=w_1^2.
\]
We then get
\begin{equation}\label{eq:ktok}
k^k  =  w_1^2+((k+1)/2)^{k+1}.
\end{equation}
Note that the two numbers in the right-hand side of \eqref{eq:ktok} are coprime, for if $p$ divides $w_1$ and $(k+1)/2$, then $p$ divides the left-hand side of \eqref{eq:ktok}. Thus $p$ divides both $k$ and $(k+1)/2$, so also $k-2((k+1)/2)=-1$, a contradiction. Thus, the right-hand side is a sum of two coprime squares and therefore all odd prime factors of it must be $1$ modulo $4$ contradicting the fact that in the left-hand side we have $k\equiv 3\pmod 4$. This finishes the proof of this theorem.
\end{proof}


\begin{thebibliography}{99}


\bibitem{Bra} BRAVO, J. J.---LUCA, F.: \textit{On a conjecture about repdigits in k-generalized Fibonacci sequences}, Publ.~Math.~Debrecen \textbf{82} (2013),  623--639.

\bibitem{CiLu} CIPU, M.---LUCA, F.: \textit{On the Galois group of the generalized Fibonacci polynomial}, Ann. \c St. Univ. Ovidius Constan\c ta {\bf 9} (2001), 27--38.

\bibitem{Dre} DRESDEN, G. P.---DU, Z.: \textit{A Simplified Binet Formula for $k$-Generalized Fibonacci Numbers}, J. Integer Sequences {\bf 17} (2014), Article 14.4.7.

\bibitem{Du} DUJELLA, A.: \textit{There are only finitely many Diophantine quintuples}, J. Reine Angew. Math. {\bf 566} (2004), 183--214.

\bibitem{Ev}  EVERTSE, J.-H.: \textit{An improvement of the quantitative Subspace Theorem}, Compos. Math. \textbf{101} (1996), 225--311.

\bibitem{ESS}  {J. H.---EVERTSE, W. M.---SCHMIDT, H. P.---SCHLICKEWEI}.: \textit{Linear equations in variables which lie in a multiplicative group}, Annals of Mathematics \textbf{155.3} (2002), 807--836.

\bibitem{FuLuSz} FUCHS, C.---LUCA, F.---SZALAY, L.: \textit{Diophantine triples with values in binary recurrences}, Ann.~Sc.~ Norm.~Super.~Pisa~Cl.~Sc.~(5) \textbf{7} (2008), 579--608.

\bibitem{FuLuSz1} FUCHS, C.---HUTLE, C.--- IRMAK, N.---LUCA, F.---SZALAY, L.: \textit{Only finitely many Tribonacci Diophantine triples exist}, to appear in Math. Slovaca; arXiv:1508.07760.

\bibitem{FuTi} FUCHS, C.---TICHY, R.F.: \textit{Perfect powers in linear recurrence sequences}, Acta Arith. {\bf 107.1} (2003), 9--25.

\bibitem{Fu2} FUCHS, C.: \textit{Polynomial-exponential equations and linear recurrences}, Glas. Mat. \textbf{38(58)} (2003), no. 2, 233--252.



\bibitem{GoLu} GOMEZ RUIZ, C. A.---LUCA, F.: \textit{Tribonacci Diophantine quadruples}, Glas.~Mat.~ \textbf{50} (2015), no. 1, 17--24.

\bibitem{ISz1} IRMAK, N.---SZALAY, L.: \textit{Diophantine triples and reduced quadruples with the Lucas sequence of recurrence $u_n=Au_{n-1}-u_{n-2}$}, Glas.~Mat.~\textbf{49} (2014), 303--312.

\bibitem{LuSz} LUCA, F.---SZALAY, L.: \textit{Fibonacci Diophantine Triples}, Glas. Mat. \textbf{43(63)} (2008), 253--264.

\bibitem{LuSz1} LUCA, F.---SZALAY, L.: \textit{Lucas Diophantine Triples}, Integers \textbf{9} (2009), 441--457.


\bibitem{Mignotte} MIGNOTTE, M.: \textit{Sur les conjugu\'es des nombres de Pisot}, C. R. Acad. Sci. Paris S\'er. I Math. {\bf 298} (1984), no. 2, 21.


\bibitem{Mar} MARTIN, P. A.: \textit{The Galois group of $X^n-X^{n-1}-\cdots-1$}, J. Pure App. Algebra {\bf 190}  (2004), 213--223.

\bibitem{JN} NEUKIRCH, J.: \textit{Algebraic Number Theory}, Springer, {\bf Vol. 322}, (1999).

\bibitem{SzZi} SZALAY, L.---ZIEGLER, V.: \textit{On an $S$-unit variant of Diophantine $m$-tuples}, Publ. Math. Debrecen {\bf 83} (2013),  97--121.

\end{thebibliography}
\end{document}